\newtheorem{thm}{Theorem}
\newtheorem{remark}{Remark}
\newcommand{\bea}{\begin{eqnarray}}
\newcommand{\eea}{\end{eqnarray}}
\newcommand{\beann}{\begin{eqnarray*}}
\newcommand{\eeann}{\end{eqnarray*}}
\newcommand{\ba}{\begin{array}}
\newcommand{\ea}{\end{array}}
\newcommand{\beq}{\begin{equation}}
\newcommand{\eeq}{\end{equation}}
\newcommand{\be}{\begin{equation}}
\newcommand{\ee}{\end{equation}}
\accentedsymbol{\hcirc}{ {\overset{\scriptscriptstyle \circ }{ {\rm H} }}}
\newcommand{\R}{\mathbb{R}}
\newcommand{\del}[2]{ \partial^{#1}_{#2} }
\newcommand{\divv}{ \nabla \!\! \cdot \!}
\newcommand{\bu}{{\bf u}}
\newcommand{\ska}[3]{ \left ( #1 \, | \, #2 \right )_{ #3 } }
\title{Reductions of the Navier-Stokes-Allen-Cahn \\ and \\ 
the Navier-Stokes-Cahn-Hilliard equations}
\author{Heinrich Freist\"uhler and Matthias Kotschote}
\date{October 26, 2013}
\begin{document}
\begin{abstract}
This paper studies two well-known models for two-phase fluid flow at constant temperature,   
the isothermal Navier-Stokes-Allen-Cahn and the isothermal Navier-Stokes-Cahn-Hilliard equations,
both of which consist of equations for the (total) fluid density $\rho$,
the (mass-averaged) velocity $\bu$ and the concentration (of one of the phases,) $c$.
Assuming in either case that both phases are incompressible with different 
densities, each of the models is shown to reduce to a system of evolution equations 
in $\rho$ and $\bu$ alone. In the case of the Navier-Stokes-Allen-Cahn model,
this reduced system is the classical Navier-Stokes-Korteweg model. The reduced
system resulting from the Navier-Stokes-Cahn-Hilliard equations is a novel 
`integro'-differential system in which a non-local operator acts on the divergence
of the velocity.   
\end{abstract}
\maketitle
\parindent=0cm
\section{With phase transformation} 
\label{sec:nsac}
We consider the Navier-Stokes-Allen-Cahn system (NSAC) of Blesgen \cite{B},
\begin{alignat}{3}
\partial_t \rho + \divv ( \rho \bu ) & = 0, & \quad 
\notag
\\
\partial_t (\rho \bu)  + \divv ( \rho \bu \otimes\bu) & = 
\divv ( \mathcal{S}(\bu)  + \mathcal{P}(c)) 
\label{eq:nsac}
\\
\partial_t (\rho c)  + \divv (\rho c\, \bu ) & =
\delta^{-1/2}(\rho q
+
\divv \left(\delta \rho \nabla c \right))
\notag
\end{alignat}
with the Cauchy stress tensor $\mathcal{S}$ and non-hydrodynamic tensor $\mathcal{P}$
\begin{equation} \label{eq:S-P}
\begin{split}
\mathcal{S}(\bu) & = 2 \mu \mathcal{D}(\bu) + \lambda \divv \bu \mathcal{I}, 
\quad \mathcal{D}(\bu) = \frac{1}{2} \big( \nabla\bu + (\nabla \bu)^T \big) 
\\
\mathcal{P}(c) & = - p \mathcal{I} - \theta \delta \rho \nabla c \otimes \nabla c
\end{split}
\end{equation}
and fixed temperature $\theta>0$.
The density $\rho$ and the production rate $q$ are given by 
\begin{equation} \label{eq:rho}
\rho^{-1}=\tau=G_p(p,c) \quad\hbox{and}\quad q= - \frac{1}{\theta} G_c(p,c) . 
\end{equation}
Here $G$ denotes the \textit{extended} Gibbs energy assumed to be of the form
\begin{equation}\label{eq:Gibbs}
G(p,c,\nabla c) = \hat G(p,c) + \theta\big( W(c) + \frac{\delta}{2} |\nabla c|^2 \big)
\end{equation}
with
\begin{align} \label{eq:LT}
\hat{G}(p,c) & =(c\tau_1+(1-c)\tau_2)p
\end{align}
and $\tau_1,\tau_2 > 0$ are constants with
\begin{equation}
\tau_1\neq\tau_2. 
\end{equation}
Note that \eqref{eq:Gibbs}, \eqref{eq:LT} is the Gibbs energy used by Lowengrub and Truskinovsky 
in their pioneering work on two-fluid mixtures \cite{LT}, in which they formulate and treat 
the corresponding Navier-Stokes-Cahn-Hillard system (that we consider below). \\
\begin{thm} \label{thm:nsac}
NSAC can 
be written as the Navier-Stokes-Korteweg system
\begin{equation} \label{eq:NSK-1}
\begin{split}
\partial_t \rho + \divv ( \rho \bu ) & = 0, 
\\
\partial_t (\rho \bu)  + \divv ( \rho \bu \otimes\bu )
- 
\divv (\mathcal{S}_\delta(\bu) + \mathcal{K}(\rho)) & = 0 
\end{split}
\end{equation}
with
\begin{align}
\mathcal{S}_\delta (\bu) & = 2 \mu \mathcal{D}(\bu) + \lambda_*(\rho) \divv \bu \, \mathcal{I}, 
\quad \lambda_*(\rho):= \lambda + \frac{\delta_*}{ \delta^{1/2} \rho }, \quad \delta_* := \frac{\theta \delta}{ (\Delta \tau)^{2}}, 
\quad \Delta \tau := \tau_1 - \tau_2, \label{eq:S}
\\
\mathcal{K}(\rho) & = - \rho^2 \psi_\rho \, \mathcal{I} + \rho \divv ( \kappa(\rho) \nabla \rho ) \, \mathcal{I} 
- \kappa(\rho) \nabla \rho \otimes \nabla \rho,\label{eq:K}
\\
\psi(\rho) & = R(\rho) + \frac{\kappa(\rho)}{2 \rho} |\nabla \rho|^2, \quad R(\rho) := \theta W(\hat{c}(\rho)), \quad 
\kappa(\rho) := \frac{\delta_*}{ \rho^{3} }, \quad \hat{c}(\rho) := \frac{1 - \tau_1 \rho}{ \Delta \tau\ \rho }. \label{eq:psi}
\end{align}
\end{thm}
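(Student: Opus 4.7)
The key observation is that, for the Lowengrub--Truskinovsky Gibbs energy \eqref{eq:Gibbs}--\eqref{eq:LT}, the defining relation $\rho^{-1}=G_p$ becomes
\[
\tfrac{1}{\rho} \;=\; c\tau_1 + (1-c)\tau_2,
\]
which is affine in $c$ and independent of both $p$ and $\nabla c$. Since $\tau_1\neq\tau_2$, this inverts to give $c$ as an explicit function $\hat c(\rho)$ of the density alone, with $\nabla c = \hat c'(\rho)\nabla\rho$ and $\rho\hat c'(\rho) = -1/(\Delta\tau\,\rho)$. Hence $c$ is determined by $\rho$, and the plan is to use the concentration equation together with the constitutive relation $q = -G_c/\theta$ to eliminate the pressure $p$ as well, after which only equations in $\rho$ and $\bu$ will remain.

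To carry out the elimination of $p$, I first rewrite the concentration equation by combining it with mass conservation to cancel the $\partial_t\rho$ contributions, obtaining $\rho(\partial_t c + \bu\cdot\nabla c) = \delta^{-1/2}(\rho q + \divv(\delta\rho\nabla c))$, and then replace the material derivative of $c$ by $\hat c'(\rho)(\partial_t\rho + \bu\cdot\nabla\rho) = -\rho\hat c'(\rho)\divv\bu$. This recasts the concentration equation as an explicit formula for $q$ in terms of $\rho$, $\nabla\rho$ and $\divv\bu$. Equating the result with the constitutive expression $q = -\Delta\tau\,p/\theta - W'(c)$ forced by \eqref{eq:Gibbs}--\eqref{eq:LT} and solving yields
\[
p \;=\; -\frac{\theta}{\Delta\tau}\,W'(c) \;-\; \frac{\delta_*}{\delta^{1/2}\rho}\divv\bu \;+\; \frac{\theta\delta}{\Delta\tau\,\rho}\,\divv(\rho\nabla c),
\]
in which the constant $\delta_* = \theta\delta/(\Delta\tau)^{2}$ appears automatically.

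The last step is to substitute this $p$, together with $\nabla c = \hat c'(\rho)\nabla\rho$, into $\mathcal{P}(c) = -p\mathcal{I} - \theta\delta\rho\nabla c\otimes\nabla c$ in the momentum equation of \eqref{eq:nsac} and group by tensor structure. The $(\delta_*/\delta^{1/2}\rho)\divv\bu\,\mathcal{I}$ contribution from $-p\mathcal{I}$ merges with $\lambda\divv\bu\,\mathcal{I}$ from $\mathcal{S}(\bu)$ to furnish the density-dependent second viscosity $\lambda_*(\rho)$, giving exactly $\mathcal{S}_\delta(\bu)$. The term $\theta W'(c)/\Delta\tau$ becomes $-\rho^2 R'(\rho)$ by the chain-rule identity $R'(\rho) = \theta W'(\hat c(\rho))\hat c'(\rho)$, and $-\theta\delta\rho\nabla c\otimes\nabla c$ becomes $-\kappa(\rho)\nabla\rho\otimes\nabla\rho$ with $\kappa(\rho) = \delta_*/\rho^{3}$; these supply the non-pressure parts of $\mathcal{K}(\rho)$.

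The only genuinely delicate step is matching the $\divv(\rho\nabla c)$ term against $\rho\divv(\kappa(\rho)\nabla\rho)\mathcal{I}$ and the quadratic-in-$\nabla\rho$ piece of $-\rho^2\psi_\rho\mathcal{I}$. Expanding $\divv(\rho\nabla c) = -\divv(\nabla\rho/(\Delta\tau\,\rho))$ produces both a Laplacian contribution and a $|\nabla\rho|^2/\rho^2$ contribution; on the other side, $\psi_\rho$ picks up a term $-2\delta_*|\nabla\rho|^2/\rho^5$ from the $\kappa(\rho)|\nabla\rho|^2/(2\rho)$ part of $\psi$. The essential algebraic identity in the proof is that the resulting $|\nabla\rho|^2$ contributions cancel exactly, while the Laplacian contributions line up; this cancellation is precisely what forces the specific power $\kappa(\rho) = \delta_*/\rho^{3}$ and, once verified, completes the identification of NSAC with the Navier-Stokes-Korteweg system \eqref{eq:NSK-1}.
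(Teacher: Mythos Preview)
Your proposal is correct and follows essentially the same route as the paper's proof: invert the affine relation $\rho^{-1}=c\tau_1+(1-c)\tau_2$ to get $c=\hat c(\rho)$, use the Allen--Cahn equation (via the material derivative of $c$, equivalently via $\tilde c(\rho)=\rho c$) to solve for $p$ explicitly, and then substitute into $\mathcal{P}(c)$ to obtain $\mathcal{S}_\delta+\mathcal{K}$. The ``delicate step'' you isolate is exactly the identity the paper verifies, namely $\rho^{-1}\divv(\rho^2\kappa\nabla\rho)=\rho\,\divv(\kappa\nabla\rho)-\rho^2\partial_\rho\!\big(\tfrac{\kappa}{2\rho}|\nabla\rho|^2\big)$ for $\kappa(\rho)=\delta_*\rho^{-3}$.
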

\begin{proof} 
First, note that by relation \eqref{eq:LT} the unknown $c$ can be regarded as a function depending 
on $\rho$. We therefore set 
\begin{equation} \label{eq:hat-c}
c = \hat{c}(\rho) = \frac{1/\rho - \tau_1}{\tau_1 - \tau_2}, \quad \rho c = \tilde{c}(\rho) = \frac{1 - \tau_1 \rho}{\tau_1- \tau_2} 
\end{equation}
with derivatives 
\begin{equation*}
\tilde{c}'(\rho) = \frac{1}{\Delta \tau}, \quad  \hat{c}'(\rho) =  - \frac{1}{\Delta \tau} \frac{1}{\rho^2}. 
\end{equation*}
Inserting these relations into the Allen-Cahn equation we obtain
\begin{equation*}
\del{}{t} (\rho c) + \divv ( \rho c u) \equiv  \tilde{c}'(\rho) \big( \del{}{t} + \nabla \rho \cdot u) + \tilde{c} \divv u
= (\tilde{c} - \rho \tilde{c}'(\rho) ) \divv u = \frac{1}{\Delta \tau} \divv u 
\end{equation*}
as well as
\begin{align*}
 \delta^{-1/2}(\rho q + \divv \left(\delta \rho \nabla c \right))
\equiv - \delta^{-1/2} \rho \left( \frac{\Delta \tau}{\theta} p + W'(c) \right)
-
\delta^{-1/2} \divv \left( \frac{\delta}{\Delta \tau \rho} \nabla \rho \right).
\end{align*}
From these relations we are able to find en explicit representation of the pressure,
\begin{equation*}
\begin{split}
p & =  - \frac{ \delta^{1/2} \theta}{ (\Delta \tau)^2 \rho } \divv u - \frac{\theta}{\Delta \tau} W'(\hat{c}(\rho)) 
- \frac{1}{\rho} \divv \left( \frac{\theta\delta}{(\Delta \tau)^2 \rho} \nabla \rho \right) 
\\
& =  - \frac{ \delta_*}{ \delta^{1/2} \rho } \divv u + \rho^2 R'(\rho) 
- \frac{1}{\rho} \divv \left( \frac{\delta_* }{\rho} \nabla \rho \right), \quad R(\rho) := W(\hat{c}(\rho)).
\end{split}
\end{equation*}
This yields
\begin{align*}
\divv ( \mathcal{S}(\bu) + \mathcal{P}(c) ) & = \divv \mathcal{S}_\delta(u) - \nabla \cdot \left( \rho^2 R'(\rho) \mathcal{I} 
- \rho^{-1} \divv \left( \delta_* \rho^{-1} \nabla \rho \right) \mathcal{I}  + \theta \delta \rho \nabla c \otimes \nabla c \right) 
\\
& = \divv \mathcal{S}_\delta(u) + \nabla \cdot \left( - \rho^2 R'(\rho) \mathcal{I} 
+ \rho^{-1} \divv \left( \delta_* \rho^{-1} \nabla \rho \right) \mathcal{I}  - \delta_* \rho^{-3} \nabla \rho \otimes \nabla \rho \right), 
\end{align*}
where we have used \eqref{eq:hat-c} and $\mathcal{S}_\delta(\bu)$ is given by \eqref{eq:S}. 
Let $\kappa(\varrho) = \delta_* \varrho^{-3}$. The second last term can be rewritten as 
\begin{align*}
\nabla \cdot \left( \varrho^{-1} \divv ( \varrho^2 \kappa(\varrho) \nabla \varrho) \mathcal{I} \right) 
& =
\nabla \left( \varrho \divv ( \kappa(\varrho) \nabla \varrho) + \varrho^{-1} \kappa(\varrho) \nabla \varrho \cdot 2 \varrho \nabla \varrho \right)
\\
& = \nabla ( \varrho \divv ( \kappa(\varrho) \nabla \varrho) + 2 \kappa(\varrho) |\nabla \varrho|^2 )
\\
& = \nabla \big( \varrho \divv ( \kappa(\varrho) \nabla \varrho) - \varrho^2 \del{}{\varrho} 
\big( \frac{\kappa(\varrho)}{2 \varrho} |\nabla \varrho|^2 \big) \big).
\end{align*}
Using this identity we find
\begin{multline*}
\divv  \big ( - \varrho^2 R'(\varrho) \mathcal{I} + \varrho^{-1} \divv \left( \delta_* \varrho^{-1} \nabla \varrho \right) \mathcal{I} 
- 
\delta_* \varrho^{-3} \nabla \varrho \otimes \nabla \varrho \big)
\equiv
\\
\divv \left( - \varrho^2 \del{}{\varrho} \psi \mathcal{I} + \varrho \divv ( \kappa(\varrho) \nabla \varrho ) \mathcal{I} 
- \kappa( \varrho) \nabla \varrho \otimes \nabla \varrho \right) =: \divv \mathcal{K}
\end{multline*}
with $\psi$, $R$, and $\kappa$ given as in \eqref{eq:psi}. The tensor $\mathcal{K}$ is exactly the Korteweg tensor defined by Dunn \& Serrin \cite{DS}. \end{proof}
\section{Without phase transformation} 
\label{sec:nsch}
The Navier-Stokes-Cahn-Hilliard equations (NSCH) read 
\begin{equation} \label{eq:nsch}
\begin{split}
\del{}{t} \rho + \divv( \rho \bu) & = 0, 
\\
\del{}{t} (\rho \bu) + \divv ( \rho \bu \otimes \bu ) - \divv \mathcal{S}(\bu)
- \divv \mathcal{P}(c) & = 0 
\\
\del{}{t} ( c \rho ) + \divv ( c \rho \bu ) - \divv ( \gamma \nabla \mu ) & = 0.
\end{split}
\end{equation}
We assume again (2) - (6) and use 
the chemical potential 
\begin{equation} \label{law:1}
\begin{split}
\mu & = \frac{1}{\theta} \del{}{c} G(p,c) - \rho^{-1} \nabla \cdot ( \delta \rho \nabla c ),
\end{split}
\end{equation}
as well as a mobility $\gamma>0$. As for NSAC, the pressure can be eliminated from the equations,
turning them into a Navier-Stokes-Korteweg system.  
This Navier-Stokes-Korteweg system has the same capillarity $\kappa(\rho)$ and extended Helmholtz 
energy $\psi$ as the ones found in the NSAC case (equation \eqref{eq:psi});
differently from the NSAC case \eqref{eq:nsac}, the Cauchy tensor acquires a 
non-local part.
\begin{thm} \label{thm:nsch}
NSCH can be written as the Navier-Stokes-Korteweg system
\begin{equation} \label{eq:NSK-2}
\begin{split}
\partial_t \rho + \divv ( \rho \bu ) & = 0, 
\\
\partial_t (\rho \bu)  + \divv ( \rho \bu \otimes\bu )
- 
\divv (\mathcal{S}_\gamma(\bu) + \mathcal{K}(\rho)) & = 0 
\end{split}
\end{equation}
with
\begin{equation} \label{eq:S-gamma}
\mathcal{S}_\gamma(\bu) = 2 \mu \mathcal{D}(\bu) + \lambda \divv \bu \, \mathcal{I} + \frac{\theta}{(\Delta\tau)^2} \Lambda^{-1}_\gamma( \divv \bu) \mathcal{I}, 
\end{equation}
$\mathcal{K}(\rho)$ given as in \eqref{eq:K}, and $\Lambda_\gamma^{-1}$ 
a solution operator
for the elliptic problem
\begin{equation} \label{eq:Lambda-3}
\Lambda_\gamma \phi \equiv- \divv ( \gamma \nabla \phi) = f  
\end{equation}
\end{thm}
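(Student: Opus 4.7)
The plan is to mimic the proof of Theorem~\ref{thm:nsac} step by step, deviating only at the point where the Cahn--Hilliard dynamics forces us to invert an elliptic operator in order to eliminate the pressure. Since assumptions \eqref{eq:rho}--\eqref{eq:LT} are carried over, the algebraic identity $\rho^{-1}=G_p(p,c)=c\tau_1+(1-c)\tau_2$ still holds; hence $c$ is once again a function of $\rho$ alone, given by \eqref{eq:hat-c}, and the manipulation used for NSAC reduces the left-hand side of the third equation of \eqref{eq:nsch} to $(\Delta\tau)^{-1}\divv\bu$.

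Next I would express the chemical potential $\mu$ of \eqref{law:1} in the reduced variables. Using $\hat{G}_c(p,c)=\Delta\tau\cdot p$, $G_c=\hat{G}_c+\theta W'(c)$, and $\nabla c=\hat{c}'(\rho)\nabla\rho$, one arrives at
\begin{equation*}
\mu \;=\; \frac{\Delta\tau}{\theta}\,p \;+\; W'(\hat{c}(\rho)) \;+\; \frac{\delta}{\Delta\tau\,\rho}\,\divv\!\bigl(\rho^{-1}\nabla\rho\bigr).
\end{equation*}
The reduced Cahn--Hilliard equation then becomes $\Lambda_\gamma\mu=-(\Delta\tau)^{-1}\divv\bu$, so that
\begin{equation*}
\mu \;=\; -\,\tfrac{1}{\Delta\tau}\,\Lambda_\gamma^{-1}(\divv\bu),
\end{equation*}
and solving the displayed formula for $p$ yields
\begin{equation*}
p \;=\; -\frac{\theta}{(\Delta\tau)^2}\,\Lambda_\gamma^{-1}(\divv\bu)\;-\;\frac{\theta}{\Delta\tau}\,W'(\hat{c}(\rho))\;-\;\frac{1}{\rho}\,\divv\!\Bigl(\frac{\delta_*}{\rho}\nabla\rho\Bigr).
\end{equation*}
This has exactly the shape of the pressure found in the NSAC proof, with the local term $-\delta_*\delta^{-1/2}\rho^{-1}\divv\bu$ replaced by the non-local one $-\theta(\Delta\tau)^{-2}\Lambda_\gamma^{-1}(\divv\bu)$.

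Substituting this representation of $p$ into $\mathcal{P}(c)=-p\mathcal{I}-\theta\delta\rho\nabla c\otimes\nabla c$ and splitting $-p\mathcal{I}$ into its local and non-local parts, the three local contributions combine with $\nabla c\otimes\nabla c$ exactly as in the proof of Theorem~\ref{thm:nsac}, and via the same algebraic identity used there they can be re-packaged verbatim into $\divv\mathcal{K}(\rho)$ with $\mathcal{K}$ and $\psi$ as in \eqref{eq:K}--\eqref{eq:psi}. The remaining non-local summand $\frac{\theta}{(\Delta\tau)^2}\Lambda_\gamma^{-1}(\divv\bu)\mathcal{I}$ has the format of a scalar viscous stress and is naturally merged with $\mathcal{S}(\bu)$ to produce $\mathcal{S}_\gamma(\bu)$ from \eqref{eq:S-gamma}, yielding \eqref{eq:NSK-2}.

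The main obstacle is analytic rather than algebraic: one must specify the functional setting in which $\Lambda_\gamma$ is inverted (the underlying domain, the boundary conditions, and the treatment of the constant kernel that typically accompanies a Neumann-type problem for \eqref{eq:Lambda-3}), and verify that the indeterminacy of $\Lambda_\gamma^{-1}(\divv\bu)$ up to an additive constant is harmless, since only $\nabla p$ enters the momentum balance. Once the solvability of \eqref{eq:Lambda-3} is taken for granted, as the theorem statement does, the remaining manipulations are a direct transcription of the NSAC argument with the single substitution indicated above.
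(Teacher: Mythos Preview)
Your proposal is correct and follows essentially the same route as the paper's own proof: the paper also states that ``the only difference from the proof of Theorem~\ref{thm:nsac} is in the representation of the pressure,'' reduces the Cahn--Hilliard equation to $\frac{1}{\Delta\tau}\divv\bu=\divv(\gamma\nabla\mu)$, inverts $\Lambda_\gamma$ to obtain $\mu$, and solves for $p$ exactly as you do, after which the Korteweg-tensor computation from Theorem~\ref{thm:nsac} is reused verbatim. Your additional remarks on the functional setting of $\Lambda_\gamma^{-1}$ parallel the paper's Remark following the theorem.
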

\begin{remark}
\rm
In case of the whole space $\R^n$, one may take 
\begin{align*}
\Lambda^{-1}_\gamma f := \frac{1}{\gamma} (\Phi_n * f)(x) = \int_{\R^n} \Phi_n(x-y) f(y) \, dy
\end{align*}
with $\Phi_n$ the Poisson kernel.
On bounded domains $\Omega$ one may fix 
$\Lambda^{-1}_\gamma$ using the constraints 
\begin{align*} 
\del{}{\nu} \phi = 0, \: \int_\Omega \phi \, dx = 0. 
\end{align*}
together with the boundary condition $$\ska{\bu}{\nu}{|\partial\Omega}=0$$ that yields 
$\int_\Omega f \, dx = 0$ for $f=  \divv \bu $.
\end{remark}
\begin{proof}
The only difference from the proof of Theorem 1 is in 
the representation of the pressure.
Using \eqref{eq:hat-c} in the Cahn-Hilliard equation of \eqref{eq:nsch}, we obtain this time
\begin{align*}
\frac{1}{\Delta \tau} \divv \bu = - \divv ( \gamma \nabla (- \mu ) ), \quad \mu = \frac{\Delta \tau}{\theta} p + W'(c) - \rho^{-1} ( \delta \rho \nabla c )  
\end{align*}
and thus
\begin{align*}
p&=-\frac{\theta}{(\Delta\tau)^2} \Lambda_\gamma^{-1} \divv \bu  - \frac{\theta}{\Delta \tau} W'(c) + \rho^{-1} \divv 
\big ( \frac{\delta \theta}{\Delta \tau} \rho \nabla c \big) 
\\
& =  -\frac{\theta}{(\Delta\tau)^2} \Lambda_\gamma^{-1} \divv \bu + 
\rho^2 R'(\rho) -  \frac{1}{\rho} \divv \left( \frac{\delta^*}{\rho} \nabla \rho \right).
\end{align*}
\end{proof}

\end{document}